\newtheorem{thm}{Theorem}[section]
\newtheorem{lem}[thm]{Lemma}
\newtheorem{cor}[thm]{Corollary}
\newtheorem{pro}[thm]{Proposition}
\newtheorem{rmk}[thm]{Remark}
\newtheorem{defi}[thm]{Definition}
\newcommand{\be }{\begin{equation}}
\newcommand{\ee }{\end{equation}}
\newcommand{\pf}{\noindent{\bf Proof.}\ }
\newcommand {\emptycomment}[1]{} 
\newcommand{\g}{\frkg}
\newcommand{\B}{\mathsf{B}}
\newcommand{\huaG}{\mathcal{G}}
\newcommand{\adj}{\frka\frkd}
\newcommand{\frka}{\mathfrak a}
\newcommand{\frkd}{\mathfrak d}
\newcommand{\frkf}{\mathfrak f}
\newcommand{\frkg}{\mathfrak g}
\def\qed{\hfill ~\vrule height6pt width6pt depth0pt}
\newcommand{\half}{\frac{1}{2}}
\newcommand{\Dorfman}[1]{\left\llbracket  #1\right\rrbracket }
\newcommand{\Courant}[1]{\{ #1\}}
\newcommand{\id}{\mathrm{id}}
\newcommand{\br}[1]{   [ \cdot,    \cdot  ]   }
\newcommand{\Hom}{\mathrm{Hom}}
\newcommand{\gl}{\mathfrak {gl}}
\newcommand{\ol}{\mathfrak {ol}}
\newcommand{\ad}{\mathrm{ad}}
\newcommand{\pr}{\mathrm{pr}}
\newcommand{\img}{\mathrm{im}}
\newcommand{\sgn}{\mathrm{sgn}}
\begin{document}
\title[Omni-representations of Leibniz algebras]
{Omni-representations of Leibniz algebras}

\author{Zhangju Liu}
\address{Department of Mathematics, Peking University,  Beijing 100871,
China}
\email{liuzj@pku.edu.cn}
\author{Yunhe Sheng}
\address{Department of Mathematics, Jilin University, Changchun 130012, Jilin, China}
\email{shengyh@jlu.edu.cn}

\date{}

\begin{abstract}
In this paper, first we
introduce the notion of an omni-representation  of a Leibniz
algebra $\g$ on a vector space $V$ as a Leibniz algebra homomorphism from $\g$ to the omni-Lie algebra $\gl(V)\oplus V$.   Then we introduce the  omni-cohomology theory associated to omni-representations and establish  the relation between omni-cohomology groups   and Loday-Pirashvili cohomology groups.
\end{abstract}

\subjclass[2010]{17B99, 17B10}

\keywords{Leibniz algebra, omni-Lie algebra,   representation, cohomology}

\maketitle
\tableofcontents

\emptycomment{\begin{cor}
 With the above notations, $J$ is totally skew-symmetric.
\end{cor}
\pf For all $x,z\in\g$, by \eqref{eq:center}, we have
\begin{eqnarray*}
  J_{x,x,z}=\frac{1}{4}\big([[z,x]_\g,x]_\g+[[x,z]_\g,x]_\g+[[x,x]_\g,z]_\g\big)=0.
\end{eqnarray*}
Similarly, we have $J_{x,z,x}=0$ and $J_{z,x,x}=0$. Thus, $J$ is totally skew-symmetric.
\qed}
\section{Introduction}
Leibniz algebras were first discovered by Bloh who called them D-algebras  \cite{Bloh}. Then Loday rediscovered this algebraic structure and called them Leibniz algebras \cite{Loday and Pirashvili}.
A Leibniz algebra is a vector space
$\frkg$, endowed with a linear map
$[\cdot,\cdot]_\frkg:\frkg\otimes\frkg\longrightarrow\frkg$
satisfying
\begin{equation}\label{eq:Leibniz}
[x,[y,z]_\frkg]_\frkg=[[x,y]_\frkg,z]_\frkg+[y,[x,z]_\frkg]_\frkg,\quad
\forall~x,y,z\in \frkg.
\end{equation}
In particular, if the bracket $[\cdot,\cdot]_\frkg$ is skew-symmetric, then it is a Lie algebra. Leibniz algebras have important applications in both mathematics and mathematical physics, e.g. the section space of a Courant algebroid is a Leibniz algebra \cite{ROy} and the underlying algebraic structure of an embedding tensor is also a Leibniz algebra which further leads to  applications in higher gauge theories \cite{KS}.

The theory of  representations  of Leibniz algebras
was introduced and studied  in \cite{Loday and Pirashvili}.

\begin{defi} \label{defi:rep old}
A representation of a Leibniz algebra
$(\frkg,[\cdot,\cdot]_\frkg)$ is a triple  $(V,l,r)$, where  $V$ is
a vector space equipped with two linear maps
$l:\g\longrightarrow\gl(V)$ and $r:\g\longrightarrow\gl(V)$ such
that the following equalities hold:
\begin{equation}\label{condition of rep}
l_{[x,y]_\frkg}=[l_{x},l_{y}],\quad
r_{[x,y]_\frkg}=[l_{x},r_{y}],\quad r_{y}\circ l_{x}=-r_{y}\circ
r_{x}, ~~~ \, ~~~\, ~~~  \forall x,y\in\frkg.
\end{equation}
\end{defi}
Especially, faithful representations of Leibniz algebras were studied by Barnes in \cite{faithful rep}; conformal representations of Leibniz algebras were studied by Kolesnikov in \cite{conformal rep}; dual representations
of Leibniz algebras were given in  \cite{TS} in their study of Leibniz bialgbras. Representations of symmetric Leibniz algebras were studied by Benayadi in \cite{Bena}.

Note that a representation of a Lie algebra $\g$ on a vector space
$V$ is a Lie algebra homomorphism from $\g$ to the Lie algebra
$\gl(V)$, which realizes an abstract Lie algebra as a subalgebra of
the general linear Lie algebra. While the above representation of a Leibniz algebra does not have this advantage. The purpose of this paper is to introduce a new representation theory so that it can realize an abstract Leibniz algebra as a subalgebra of a concrete Leibniz algebra. The omni-Lie algebra $\gl(V)\oplus V$ introduced by Weinstein in \cite{Weinomni} is naturally a Leibniz algebra and the main ingredient in our study. We introduce the notion of an omni-representation of a Leibniz algebra $\g$ on a vector space $V$ which is a homomorphism from $\g$ to the Leibniz algebra  $\gl(V)\oplus V$. We show that a usual representation  $(V,l,r)$ gives rise to an omni-representation $\rho=(l^*\otimes 1+ 1\otimes l)+r $    of $\g$ on  $V^*\otimes V$.

The  cohomology theory of Leibniz algebras
was also developed by Loday and   Pirashvili  in \cite{Loday and Pirashvili}.
See   \cite{ALO,Omirov,Cheng,Wagemann,FMM,HuPeiLiu} for
more applications of Loday-Pirashvili cohomologies of Leibniz algebras. We also develop the omni-cohomology theory for   omni-representations introduced above, and give the relation between omni-cohomology groups   and Loday-Pirashvili cohomology groups.

The paper is organized as follows. In Section \ref{sec:rep}, we restudy representation of Leibniz algebras and the corresponding semidirect products. In Section \ref{sec:naive}, we introduce the notion of omni-representations of Leibniz algebras and study the relation between omni-representations and the usual representations. In Section \ref{sec:coh}, we introduce omni-cohomology groups for Leibniz algebras with coefficients in omni-representations, and establish the relation between omni-cohomology groups and Loday-Pirashvili cohomology groups.

\section{Representations of Leibniz algebras}\label{sec:rep}
Let $(V,l,r)$ be a representation of a Leibniz algebra
$(\frkg,[\cdot,\cdot]_\frkg)$.

\begin{defi} \label{defi:cohomo old}
The  Loday-Pirashvili cohomology of $\frkg$ with coefficients in $V$ is the
cohomology of the cochain complex
$C^k(\frkg,V)=\Hom(\otimes^k\frkg,V), (k\geq0)$ with the coboundary
operator
 $$\partial:C^k(\frkg,V)\longrightarrow C^{k+1}(\frkg,V)$$
defined by
\begin{eqnarray}
\nonumber\partial
c^k(x_1,\dots,x_{k+1})&=&\sum_{i=1}^k(-1)^{i+1}l_{x_i}(c^k(x_1,\dots,\widehat{x_i},\dots,x_{k+1}))\\
\nonumber&&+(-1)^{k+1}r_{x_{k+1}}(c^k(x_1,\dots,x_k))\\
\label{formulapartial}&&+\sum_{1\leq i<j\leq
k+1}(-1)^ic^k(x_1,\dots,\widehat{x_i},\dots,x_{j-1},[x_i,x_j]_\frkg,x_{j+1},\dots,x_{k+1}).
\end{eqnarray}
The resulting cohomology is denoted by $H^\bullet(\g;l,r)$.
\end{defi}

Obviously, $(\mathbb R, 0, 0)$ is a representation of  a Leibniz algebra
$(\frkg,[\cdot,\cdot]_\frkg)$, which is
called the trivial representation. Denote the corresponding cohomology
by $H^\bullet(\g).$ Another important representation is the adjoint
representation  $(\g, \ad_L, \ad_R)$, where
$\ad_L:\g\longrightarrow\gl(\g)$ and
$\ad_R:\g\longrightarrow\gl(\g)$  are defined as follows:
\begin{equation}
  \ad_L(x)(y)=[x,y]_\g,\quad \ad_R(x)(y)=[y,x]_\g, ~~~ \, ~~~\, ~~~  \forall x,y\in\frkg.
\end{equation}
 The corresponding cohomology is denoted by
 $H^\bullet(\g;\ad_L, \ad_R).$

 A permutation $\sigma\in\mathbb S_n$ is called an $(i,n-i)$-{\bf shuffle} if $\sigma(1)<\cdots <\sigma(i)$ and $\sigma(i+1)<\cdots <\sigma(n)$. If $i=0$ or $n$ we assume $\sigma=\id$. The set of all $(i,n-i)$-shuffles will be denoted by $\mathbb S_{(i,n-i)}$. The notion of an $(i_1,\cdots,i_k)$-shuffle and the set $\mathbb S_{(i_1,\cdots,i_k)}$ are defined analogously.

Let $\g$ be a vector space. We consider the graded vector space $C^*(\g,\g)=\oplus_{n\ge 1}C^n(\g,\g)=\oplus_{n\ge 1}\Hom(\otimes^n\g,\g)$. The Balavoine bracket on the graded vector space $C^*(\g,\g)$ is given
by:
\begin{eqnarray}\label{leibniz-bracket}
[P,Q]_\B=P\bar{\circ}Q-(-1)^{pq}Q\bar{\circ}P,
\end{eqnarray}
for all $P\in C^{p+1}(\g,\g),Q\in C^{q+1}(\g,\g),$ where $P\bar{\circ}Q\in C^{p+q+1}(\g,\g)$ is defined by
\begin{eqnarray}
P\bar{\circ}Q=\sum_{k=1}^{p+1}P\circ_k Q,
\end{eqnarray}
and $\circ_k$ is defined by
\begin{eqnarray}
&&(P\circ_kQ)(x_1,\cdots,x_{p+q+1})
=\sum_{\sigma\in\mathbb S_{(k-1,q)}}(-1)^{\sigma}(-1)^{(k-1)q}\\ \nonumber&&\qquad P(x_{\sigma(1)},\cdots,x_{\sigma(k-1)},Q(x_{\sigma(k)},\cdots,x_{\sigma(k+q-1)},x_{k+q}),x_{k+q+1},\cdots,x_{p+q+1}).
\end{eqnarray}
It is well known that

\begin{thm}{\rm (\cite{Balavoine,Fialowski})}\label{leibniz-algebra-B}
With the above notations, $(C^*(\g,\g),[\cdot,\cdot]_{\B})$ is a graded Lie algebra.
\end{thm}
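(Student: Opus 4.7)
The plan is to verify the two axioms of a graded Lie algebra for the pair $(C^*(\g,\g),[\cdot,\cdot]_\B)$: graded skew-symmetry and the graded Jacobi identity.

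\textbf{Graded skew-symmetry.} For $P\in C^{p+1}(\g,\g)$ and $Q\in C^{q+1}(\g,\g)$, the identity $[P,Q]_\B = -(-1)^{pq}[Q,P]_\B$ is immediate from \eqref{leibniz-bracket}: writing out $[Q,P]_\B = Q\bar\circ P - (-1)^{pq}P\bar\circ Q$ and multiplying by $-(-1)^{pq}$ recovers $P\bar\circ Q - (-1)^{pq}Q\bar\circ P = [P,Q]_\B$. No combinatorics are needed here.

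\textbf{Graded Jacobi identity.} The heart of the proof is to analyze the non-associative product $\bar\circ$. The plan is to show that the associator
$$
\huaA(P,Q,R) \defbe (P\bar\circ Q)\bar\circ R - P\bar\circ (Q\bar\circ R)
$$
is graded symmetric in its last two arguments, i.e.
$$
\huaA(P,Q,R) = (-1)^{qr}\huaA(P,R,Q).
$$
Granting this, the graded Jacobi identity
$$
[P,[Q,R]_\B]_\B = [[P,Q]_\B,R]_\B + (-1)^{pq}[Q,[P,R]_\B]_\B
$$
follows by a purely formal manipulation: expanding both sides using \eqref{leibniz-bracket} and regrouping, the difference is expressible entirely in terms of associators, and the graded symmetry above kills every term.

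\textbf{Proving the associator identity (the main obstacle).} I would expand $(P\bar\circ Q)\bar\circ R = \sum_{i,j} (P\circ_i Q)\circ_j R$ and split the sum according to whether the insertion position $j$ for $R$ falls inside the block occupied by $Q$ or falls at a slot of $P$ disjoint from that block. The ``nested'' terms where $R$ is inserted into $Q$ regroup, after a shuffle-reindexing, to give exactly $P\bar\circ(Q\bar\circ R)$; this cancellation is the Leibniz analogue of the pre-Lie identity for the Gerstenhaber circle product. The remaining ``disjoint'' terms, where $Q$ and $R$ are inserted at separate slots of $P$, form a double sum that is manifestly graded symmetric in $Q$ and $R$ once one accounts for the shuffle signs $(-1)^\sigma$ and the Koszul factors $(-1)^{(k-1)q}$ in the definition of $\circ_k$. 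The hard part is the bookkeeping of shuffles: the shuffles indexing $(P\circ_i Q)\circ_j R$ must be factored as iterated shuffles and matched with those indexing $P\circ_i(Q\circ_j R)$; here one repeatedly uses the decomposition $\mathbb S_{(k-1,q+r)} = \mathbb S_{(k-1,q,r)}$ up to sign and carefully tracks the positions of inserted arguments to match sign conventions.

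Once the associator identity is in hand, the graded Jacobi identity is a one-line consequence, and together with graded skew-symmetry this proves that $(C^*(\g,\g),[\cdot,\cdot]_\B)$ is a graded Lie algebra. In the write-up I would cite \cite{Balavoine,Fialowski} for the detailed combinatorial check rather than reproduce every shuffle manipulation.
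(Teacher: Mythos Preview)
Your outline is correct and follows the standard Gerstenhaber-style argument (graded skew-symmetry is immediate from the definition; the Jacobi identity reduces to the right-symmetry of the associator of $\bar\circ$), which is precisely the strategy in the cited references \cite{Balavoine,Fialowski}. Note, however, that the paper itself does not prove this theorem at all: it is stated as a known result with the prefatory phrase ``It is well known that'' and attributed to \cite{Balavoine,Fialowski}, so there is no in-paper proof to compare your proposal against.
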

In particular, for
$\alpha\in C^2(\g,\g)$, we have
\begin{equation}
 ~[\alpha,\alpha]_\B(x,y,z)=2\alpha\circ\alpha(x,y,z)=2\Big(\alpha(\alpha(x,y),z)-\alpha(x,\alpha(y,z))+\alpha(y,\alpha(x,z))\Big).
\end{equation}
Thus, $\alpha$ defines a Leibniz algebra structure if and only if $[\alpha,\alpha]_\B=0.$

\emptycomment{
The graded vector space $\bigoplus_k C^k(\frkg,\g)$ equipped with
the graded bracket
\begin{equation}
  [\alpha,\beta]=\alpha\circ\beta+(-1)^{pq+1}\beta\circ\alpha, \quad\forall ~\alpha\in C^{p+1}(\frkg,\g),~\beta\in C^{q+1}(\frkg,\g)
\end{equation}
  is a   graded Lie algebra, where $\alpha\circ\beta\in C^{p+q+1}(\g,\g)$ is defined by
\begin{eqnarray*}
  \alpha\circ\beta(x_1,\cdots,x_{p+q+1})&=&\sum_{k=0}^{p}(-1)^{kq}\Big(\sum_{\sigma\in sh(k,q)}\sgn(\sigma)\alpha(x_{\sigma(1)},\cdots,x_{\sigma(k)},\\
  &&\qquad\beta(x_{\sigma(k+1)},\cdots,x_{\sigma(k+q)},x_{k+q+1}),x_{k+q+2},\cdots,x_{p+q+1})\Big).
\end{eqnarray*}
See \cite{Balavoine,Fialowski} for more details.
}

For a representation  $(V,l,r)$ of $\g$, it is obvious that
$(V,l,0)$ is also a representation of $\g$. Thus,  we have two
semidirect product Leibniz algebras
 $\g\ltimes_{(l,r)}V$  and $\g\ltimes_{(l,0)}V$ with the brackets $[\cdot,\cdot]_{(l,r)}$ and
$[\cdot,\cdot]_{(l,0)}$ given respectively by
\begin{eqnarray*}
 ~ [x+u,y+v]_{(l,r)}&=&[x,y]_\g+l_xv+r_yu,\\
  ~ [x+u,y+v]_{(l,0)}&=&[x,y]_\g+l_xv.
\end{eqnarray*}
The right action $r$ induces a linear map $\overline{r}:(\g\oplus
V)\otimes (\g\oplus V)\longrightarrow \g\oplus V$ as follows:
$$
 \overline{r}(x+u,y+v)=r_yu,\quad \forall x,y\in\g,~u,v\in V.
$$

The adjoint representations $\ad_L$ and $\ad_R$ of the Leibniz algebra
$\g\ltimes_{(l,0)}V$ are given by
  $$\ad_L(x+u)(y+v)=[x,y]_\g+l_xv, ~~~ \, ~~~
  \ad_R(x+u)(y+v)=[y,x]_\g+l_yu.$$
\begin{thm}
  With the above notations, $\overline{r}$ satisfies the following Maurer-Cartan   equation on the Leibniz algebra $\g\ltimes_{(l,0)}V$:
  $$
  \partial \overline{r}-\half[\overline{r},\overline{r}]_\B=0,
  $$
   where $\partial$ is  the coboundary
operator for the Leibniz algebra  $\g\ltimes_{(l,0)}V$ with coefficients in the adjoint representation.
 Consequently, the Leibniz algebra $\g\ltimes_{(l,r)}V$ is a deformation of the Leibniz algebra $\g\ltimes_{(l,0)}V$
  via the  Maurer-Cartan element $\overline{r}$.
  \end{thm}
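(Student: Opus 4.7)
The plan is to exploit the graded Lie algebra structure $(C^*(\g\oplus V,\g\oplus V),[\cdot,\cdot]_\B)$ from Theorem~\ref{leibniz-algebra-B} and the fact (recorded just after that theorem) that a bilinear map $\mu\in C^2$ defines a Leibniz bracket if and only if $[\mu,\mu]_\B=0$. Write $\mu_0\in C^2(\g\oplus V,\g\oplus V)$ for the bracket of $\g\ltimes_{(l,0)}V$ and $\mu_1=\mu_0+\overline{r}$ for the bracket of $\g\ltimes_{(l,r)}V$. Both are Leibniz brackets, so
\begin{equation*}
[\mu_0,\mu_0]_\B=0,\qquad [\mu_0+\overline{r},\mu_0+\overline{r}]_\B=0.
\end{equation*}
Expanding the second equality by graded skew-symmetry of the Balavoine bracket and subtracting the first gives the basic identity
\begin{equation*}
2[\mu_0,\overline{r}]_\B+[\overline{r},\overline{r}]_\B=0.
\end{equation*}

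The next step is the bridge lemma: for any $\alpha\in C^k(\g\ltimes_{(l,0)}V,\g\ltimes_{(l,0)}V)$, the coboundary of $\alpha$ with respect to the adjoint representation coincides (up to the sign dictated by my conventions) with the Balavoine bracket $[\mu_0,\alpha]_\B$. I would verify this by unpacking both sides of the identity on a generic tuple of elements $x_1,\dots,x_{k+1}\in \g\oplus V$: on one side, the formula for $\partial$ in Definition~\ref{defi:cohomo old} applied with $l=\ad_L$ and $r=\ad_R$; on the other side, the explicit shuffle expansion of $\mu_0\bar{\circ}\alpha+(-1)^{k}\alpha\bar{\circ}\mu_0$. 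For $\alpha=\overline{r}$ this produces the relation $\partial\overline{r}=-[\mu_0,\overline{r}]_\B$, and substituting into the basic identity yields exactly
\begin{equation*}
\partial\overline{r}-\tfrac12[\overline{r},\overline{r}]_\B=0.
\end{equation*}

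For the ``consequently'' clause, once the Maurer--Cartan equation is established the deformation statement is immediate: the condition $[\mu_0+\overline{r},\mu_0+\overline{r}]_\B=0$ is itself the assertion that the perturbed bracket $\mu_0+\overline{r}$ is Leibniz, which is exactly what it means for $\overline{r}$ to be a deformation of $\mu_0$ through Maurer--Cartan elements of the DGLA $(C^*(\g\ltimes_{(l,0)}V,\g\ltimes_{(l,0)}V),\partial,[\cdot,\cdot]_\B)$.

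The main obstacle is the bridge lemma identifying $\partial$ with $\pm[\mu_0,\cdot]_\B$; this is standard in the Balavoine/Fialowski framework but requires careful bookkeeping of the $(k-1,q)$-shuffle signs and the sign $(-1)^{(k-1)q}$ appearing in the definition of $\circ_k$. I would discharge it by reducing to the two elementary verifications that correspond to the two summands ``left'' and ``right'' of the coboundary formula, namely the contributions of $\mu_0\circ_k\alpha$ (producing the $l$-terms and the internal $[\cdot,\cdot]$-terms with $j\le k$) and of $\alpha\circ_k\mu_0$ (producing the $r$-term and the internal $[\cdot,\cdot]$-terms with $j=k+1$). No further structural input is needed, since the fact that $\mu_0$ splits $\g\oplus V$ into a semidirect product plays no role in the algebraic identity itself.
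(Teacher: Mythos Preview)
Your proposal is correct but proceeds by a genuinely different route from the paper. The paper argues by direct computation: it expands $\partial\overline{r}(x+u,y+v,z+w)$ term by term from the coboundary formula for the adjoint representation of $\g\ltimes_{(l,0)}V$, separately computes $[\overline{r},\overline{r}]_\B(x+u,y+v,z+w)=2r_zr_yu$, and then checks that the difference vanishes by invoking the three identities in \eqref{condition of rep}. No use is made of the Leibniz property of $\mu_1=\mu_0+\overline{r}$, and no bridge lemma is needed.

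Your argument instead starts from the fact (recorded just before the theorem) that both $\g\ltimes_{(l,0)}V$ and $\g\ltimes_{(l,r)}V$ are already known to be Leibniz algebras, so $[\mu_0,\mu_0]_\B=[\mu_1,\mu_1]_\B=0$; expanding and using the bridge identity $\partial=-[\mu_0,\cdot]_\B$ (your sign is indeed the one forced by the paper's conventions) yields the Maurer--Cartan equation with no further calculation. This is cleaner and makes transparent that the MC equation is \emph{equivalent} to $\mu_1$ being Leibniz. The trade-off is in logical direction: the paper's computation establishes the MC equation independently and then deduces the deformation statement as a genuine consequence, whereas your argument uses the deformation (i.e.\ that $\mu_1$ is Leibniz) as input to obtain MC and then reads the ``consequently'' clause as the tautological reverse implication. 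Since the paper has already asserted that $\g\ltimes_{(l,r)}V$ is a Leibniz algebra, your argument is not circular in context; but if the theorem were meant to give an independent proof of that fact, only the paper's direct computation would do.
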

\begin{proof} By direct computation, we have
\begin{eqnarray*}
\partial \overline{r}(x+u,y+v,z+w)&=&\ad_L(x+u)\overline{r}(y+v,z+w)-\ad_L(y+v)\overline{r}(x+u,z+w)\\
&&-\ad_R(z+w)\overline{r}(x+u,y+v)- \overline{r}([x+u,y+v]_{(l,0)},z+w)\\
 &&+\overline{r}(x+u,[y+v,z+w]_{(l,0)})-\overline{r}(y+v,[ x+u,z+w]_{(l,0)})\\
  &=&l_xr_zv-l_yr_zu-r_zl_xv+r_{[y,z]_\g}u-r_{[x,z]_\g}v.
\end{eqnarray*}
On the other hand, we have
\begin{eqnarray*}
  [\overline{r},\overline{r}]_\B(x+u,y+v,z+w)&=&2\big(\overline{r}(\overline{r}(x+u,y+v),z+w)-\overline{r}(x+u,\overline{r}(y+v,z+w))\\&&+\overline{r}(y+v,\overline{r}(x+u,z+w))\big)
  \\&=&2r_zr_yu.
\end{eqnarray*}
Thus, by \eqref{condition of rep}, we have
\begin{eqnarray*}
  \Big(\partial \overline{r}-\half[\overline{r},\overline{r}]_\B\Big)(x+u,y+v,z+w)&=&l_xr_zv-l_yr_zu-r_zl_xv+r_{[y,z]_\g}u-r_{[x,z]_\g}v-r_zr_yu\\
  &=&l_xr_zv-l_yr_zu-r_zl_xv+r_{[y,z]_\g}u-r_{[x,z]_\g}v+r_zl_yu\\
  &=&0.
\end{eqnarray*}
The proof is finished. \end{proof}

 Define $l^*:\g\longrightarrow\gl(V^*)$  by
  $$\langle l^*_x(\xi),u\rangle=-\langle\xi,l_xu\rangle,  ~~~~~ \, ~~~ \forall x \in \g, ~ \xi \in V^*, ~ u\in V.$$
  It is straightforward to see that $(V^*\otimes V,l^*\otimes 1+ 1\otimes l,0)$ is also a representation of $\g$,
  where $l^*\otimes 1+ 1\otimes l:\g\longrightarrow\gl(V^*\otimes V)$ is given by
$$
(l^*\otimes 1+ 1\otimes l)_x(\xi\otimes u)=(l^*_x\xi)\otimes u+\xi\otimes l_xu,\quad \forall \xi\in V^*,~u\in V.
$$

Since  $V^*\otimes V \cong \gl(V)$, an element $\xi\otimes u$ in
$V^*\otimes V$ can be identified with a linear map $A\in\gl(V)$ via
$A(v)=\langle\xi,v\rangle u$.

\begin{pro}\label{pro:1cocycle}
  With the above notations,  for all $A\in V^*\otimes V \cong \gl(V)$, we have
  $$
  (l^*\otimes 1+ 1\otimes l)_x A=[l_x,A]=l_x\circ A-A\circ l_x.
  $$
  Moreover, the right action  $r:\g\longrightarrow \gl(V)$ is a $1$-cocycle on $\g$ with coefficients in  the
   representation $(V^*\otimes V,l^*\otimes 1+ 1\otimes l,0)$.
\end{pro}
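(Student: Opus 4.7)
The plan is to verify the two assertions in order: first the identification of $(l^* \otimes 1 + 1 \otimes l)_x$ with the commutator $[l_x, -]$ on $\gl(V)$, and then the 1-cocycle property of $r$, which will follow almost immediately from the first identification together with \eqref{condition of rep}.

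For the first claim I would reduce to elementary tensors $A = \xi \otimes u$, which act on $V$ by $A(v) = \langle \xi, v \rangle u$. Unwinding the tensor representation gives $(l^* \otimes 1 + 1 \otimes l)_x(\xi \otimes u) = (l^*_x \xi) \otimes u + \xi \otimes l_x u$. Evaluating at an arbitrary $v \in V$ and using the defining relation $\langle l^*_x \xi, v \rangle = -\langle \xi, l_x v \rangle$, the first summand becomes $-\langle \xi, l_x v \rangle u = -(A \circ l_x)(v)$ and the second becomes $\langle \xi, v \rangle l_x u = (l_x \circ A)(v)$. The sum is $[l_x, A](v)$, and by bilinearity the identity extends from elementary tensors to all of $V^* \otimes V \cong \gl(V)$.

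For the 1-cocycle statement I would apply the coboundary formula of Definition \ref{defi:cohomo old} with $k = 1$ to the representation $(V^* \otimes V, l^* \otimes 1 + 1 \otimes l, 0)$. Since the right action is zero, only two terms survive, and the formula collapses to
\begin{equation*}
\partial r(x, y) = (l^* \otimes 1 + 1 \otimes l)_x(r_y) - r_{[x, y]_\g}.
\end{equation*}
By the first part this equals $[l_x, r_y] - r_{[x, y]_\g}$, which vanishes by the second relation in \eqref{condition of rep}. Hence $\partial r = 0$, so $r$ is a 1-cocycle. There is no substantive obstacle here; the entire argument is bookkeeping, and the only points to watch are the sign convention in the definition of $l^*$ and the signs in the coboundary formula when specialising to $k=1$.
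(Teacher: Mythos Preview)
Your proposal is correct and follows essentially the same route as the paper's own proof: reduce to an elementary tensor $A=\xi\otimes u$, evaluate at $v\in V$ using the sign convention for $l^*$, and identify the two summands with $-(A\circ l_x)(v)$ and $(l_x\circ A)(v)$; then specialise the coboundary formula at $k=1$ with zero right action to obtain $\partial r(x,y)=[l_x,r_y]-r_{[x,y]_\g}=0$ via \eqref{condition of rep}. The only additions you make beyond the paper are the explicit remark on extending by bilinearity and the check of the $k=1$ signs, both of which are harmless.
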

\begin{proof} Write $A=\xi\otimes u$, then we have
\begin{eqnarray*}
  (l^*\otimes 1+ 1\otimes l)_x A(v)&=&(l^*\otimes 1+ 1\otimes l)_x (\xi\otimes u)(v)=\Big((l^*_x\xi)\otimes u+\xi\otimes l_xu\Big)(v)\\
  &=&\langle l^*_x\xi,v\rangle u+\langle\xi,v\rangle l_xu=-\langle\xi,l_xv\rangle u+\langle\xi,v\rangle l_xu\\
  &=&[l_x,A](v).
\end{eqnarray*}
Therefore,  by \eqref{condition of rep}, we have
\begin{eqnarray*}
  \partial r(x,y)= (l^*\otimes 1+ 1\otimes l)_xr(y)-r([x,y]_\g)=[l_x,r_y]-r_{[x,y]_\g}=0,
\end{eqnarray*}
which implies that $r$ is a $1$-cocycle.\end{proof}

\section{Omni-representations of Leibniz algebras}\label{sec:naive}

It is known that the aim of a representation is to  realize an
abstract algebraic structure  as a class of linear transformations
on a vector space.  Such as  a Lie algebra  representation is a
homomorphism from $\g$ to the general linear Lie algebra $\gl(V)$.
Unfortunately, the  representation of a Leibniz algebra discussed
above does not realize a Leibniz algebra as a subalgebra of certain explicit Leibniz algebra. Therefore, it is reasonable for us to  provide an
alternative definition for the representation of a Leibniz algebra.
It is lucky that there is a god-given Leibniz algebra  worked as a
``general linear algebra''  defined as follows:

Given a vector space $V$,   then $(V,l=\id,r=0)$ is a natural
representation of $\gl(V)$, which is viewed as a Leibniz algebra.
 The corresponding semidirect product
Leibniz algebra structure on $\gl(V)\oplus V$ is given by
\begin{eqnarray*}
\Courant{A+u,B+v}&=&[A,B]+Av,\quad \forall~A,B\in \gl(V),~u,v\in V.
\end{eqnarray*}
This Leibniz algebra is called an {\bf
omni-Lie algebra} and denoted by $\ol(V)$.
 The notion of an omni-Lie algebra was introduced  by Weinstein in \cite{Weinomni}
 as the linearization of a Courant algebroid. The notion of a Courant algebroid was introduced
 in \cite{LWXmani}, which has been widely  applied in many
fields both for mathematics and physics (see \cite{KS} for
more details). Its Leibniz algebra structure also  played  an
important role when studying the
 integrability of Courant brackets \cite{kinyon-weinstein,LW}.

 Notice that  the skew-symmetric
 bracket,
 \begin{equation}
   \Dorfman{A+u,B+v}=[A,B]+\frac{1}{2}(Av-Bu),
 \end{equation}
which is obtained via the skew-symmetrization of $\Courant{\cdot,\cdot}$, is used
 in Weinstein's original
 definition. \emptycomment{As a special case in Theorem \ref{thm:main1},  $(\gl(V)\oplus
 V,\Dorfman{\cdot,\cdot})$ is a Lie 2-algebra.}
Even though an  omni-Lie algebra is  not a Lie algebra,
  all Lie algebra structures on $V$ can be characterized by the Dirac structures
  in   $\ol(V)$. In fact, the next proposition will show that  every Leibniz algebra structure on $V$ can
  be realized as a Leibniz subalgebra of $\ol(V)$.
For any $\varphi:V\longrightarrow\gl(V)$, consider its graph
$$\huaG_\varphi=\{\varphi(u)+u\in\gl(V)\oplus V|~\forall u\in V\}.$$

\begin{pro}
  With the above notations, $\huaG_\varphi$ is a Leibniz subalgebra  of $\ol(V)$ if and only if
  \begin{equation}\label{eq:gr}
    [\varphi(u),\varphi(v)]=\varphi(\varphi(u)v),\quad\forall~u,v\in V.
  \end{equation}
  Furthermore, under this condition, $(V,[\cdot,\cdot]_\varphi)$ is a Leibniz algebra, where the linear map $[\cdot,\cdot]_\varphi:V\otimes V\longrightarrow V$ is given by
  \begin{equation}
    [u,v]_\varphi=\varphi(u)v,\quad \forall~u,v\in V.
  \end{equation}
\end{pro}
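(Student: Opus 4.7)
The plan is to unpack the two parts directly from the omni-Lie bracket. For the first equivalence, I would compute
\begin{equation*}
\Courant{\varphi(u)+u,\varphi(v)+v}=[\varphi(u),\varphi(v)]+\varphi(u)v
\end{equation*}
using $\Courant{A+u,B+v}=[A,B]+Av$. An element of $\gl(V)\oplus V$ lies in $\huaG_\varphi$ precisely when its $\gl(V)$-component equals $\varphi$ applied to its $V$-component. Since the $V$-part of the bracket above is $\varphi(u)v$, membership in $\huaG_\varphi$ is equivalent to $[\varphi(u),\varphi(v)]=\varphi(\varphi(u)v)$, which is exactly \eqref{eq:gr}. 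This yields the stated iff.

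For the second assertion, the cleanest route is to observe that once \eqref{eq:gr} holds, $\huaG_\varphi$ is a Leibniz subalgebra of $\ol(V)$ and hence a Leibniz algebra in its own right; the linear isomorphism $V\to\huaG_\varphi$, $u\mapsto\varphi(u)+u$, then transports this structure to $V$, and by the computation above the transported bracket is $[u,v]_\varphi=\varphi(u)v$. Alternatively, one can verify the Leibniz identity \eqref{eq:Leibniz} by hand: the left-hand side is $\varphi(x)\varphi(y)z$, while the right-hand side is $\varphi(\varphi(x)y)z+\varphi(y)\varphi(x)z$, and a single application of \eqref{eq:gr} to rewrite $\varphi(\varphi(x)y)=[\varphi(x),\varphi(y)]=\varphi(x)\varphi(y)-\varphi(y)\varphi(x)$ collapses the two expressions to the same thing.

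There is no real obstacle here; the whole proposition amounts to reading off the $\gl(V)$- and $V$-components of the omni-Lie bracket. The only point worth flagging is that the $V$-component $\varphi(u)v$ is forced by the omni-Lie structure and independent of any closure condition on $\varphi$, which is precisely what makes \eqref{eq:gr} both necessary and sufficient rather than merely sufficient.
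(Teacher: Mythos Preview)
Your proposal is correct and follows essentially the same approach as the paper: the paper's proof consists of the single displayed computation $\Courant{\varphi(u)+u,\varphi(v)+v}=[\varphi(u),\varphi(v)]+\varphi(u)v$ together with the remark that a subspace of a Leibniz algebra is a subalgebra iff it is closed, and then dismisses the second assertion as ``straightforward.'' Your write-up simply makes explicit what the paper leaves implicit, including the transport-of-structure argument (or, alternatively, the direct check of the Leibniz identity via \eqref{eq:gr}); there is no substantive difference in strategy.
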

\begin{proof} Since $\ol(V)$ is a Leibniz algebra, we only need to show that $\huaG_\varphi$ is closed if and only if \eqref{eq:gr} holds. The conclusion follows from
\begin{eqnarray*}
  \Courant{\varphi(u)+u,\varphi(v)+v}=[\varphi(u),\varphi(v)]+\varphi(u)v.
\end{eqnarray*}
The other conclusion is straightforward. The proof is finished. \end{proof}

\begin{rmk}
 The condition \eqref{eq:gr} actually means that $\varphi$ is an embedding tensor. See \cite{STZ} for more details about embedding tensors.
\end{rmk}

Recall that a representation of a Lie algebra $\g$ on a vector space
$V$ is a Lie algebra homomorphism from $\g$ to the Lie algebra
$\gl(V)$, which realizes an abstract Lie algebra as a subalgebra of
the general linear Lie algebra. Similarly, For a  Leibniz algebra, we
suggest the following definition:
\begin{defi}
  An {\bf omni-representation} of a Leibniz algebra
$(\frkg,[\cdot,\cdot]_\frkg)$ on a vector space $V$ is a Leibniz algebra homomorphism $\rho:\g\longrightarrow\ol(V)$.
\end{defi}

According to the two components of   $\gl(V)\oplus V$,  every linear
map $\rho:\g\longrightarrow\ol(V)$   splits to two linear
maps: $\phi:\g\longrightarrow \gl(V)$ and
   $\theta:\g\longrightarrow V$. Then, we have


\begin{pro}\label{pro:rep con}
 A linear map $\rho=\phi+\theta:\g\longrightarrow\gl(V)\oplus V$ is an omni-representation  of  a Leibniz algebra
$(\frkg,[\cdot,\cdot]_\frkg)$
 if and only if $(V,\phi,0) $ is a representation of the Leibniz algebra
$(\frkg,[\cdot,\cdot]_\frkg)$ and $\theta:\g\longrightarrow V$ is a
 $1$-cocycle on the Leibniz algebra
$(\frkg,[\cdot,\cdot]_\frkg)$ with coefficients in
the representation $(V,\phi,0)$.
\end{pro}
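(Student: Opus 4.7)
The plan is to expand the homomorphism identity $\rho([x,y]_\g)=\Courant{\rho(x),\rho(y)}$ componentwise, using the omni-Lie bracket $\Courant{A+u,B+v}=[A,B]+Av$, and then to read off the $\gl(V)$-part and the $V$-part separately. Since $\rho=\phi+\theta$ with $\phi$ taking values in $\gl(V)$ and $\theta$ in $V$, the left-hand side decomposes as $\phi([x,y]_\g)+\theta([x,y]_\g)$, while the right-hand side decomposes as $[\phi(x),\phi(y)]+\phi(x)\theta(y)$. Therefore $\rho$ is a Leibniz algebra homomorphism if and only if the following two identities hold for all $x,y\in\g$:
\begin{equation*}
\phi([x,y]_\g)=[\phi(x),\phi(y)],\qquad \theta([x,y]_\g)=\phi(x)\theta(y).
\end{equation*}

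First I would check that the first identity is precisely the condition for $(V,\phi,0)$ to be a representation of $\g$ in the sense of Definition \ref{defi:rep old}. Indeed, substituting $l=\phi$ and $r=0$ into \eqref{condition of rep} makes the second and third equalities trivially satisfied ($0=0$), leaving only $l_{[x,y]_\g}=[l_x,l_y]$, which is exactly $\phi([x,y]_\g)=[\phi(x),\phi(y)]$.

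Next I would identify the second identity with the $1$-cocycle condition for $\theta\in C^1(\g,V)$ with coefficients in $(V,\phi,0)$. Applying the Loday-Pirashvili coboundary formula \eqref{formulapartial} in degree one (where the middle $r$-term is killed by $r=0$) yields
\begin{equation*}
\partial\theta(x,y)=\phi(x)\theta(y)-\theta([x,y]_\g),
\end{equation*}
so $\partial\theta=0$ is precisely $\theta([x,y]_\g)=\phi(x)\theta(y)$. Combining the two steps proves the equivalence in both directions.

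The argument is essentially a direct verification, so no serious obstacle is expected; the only point requiring a bit of care is checking that the component splitting of $\Courant{\cdot,\cdot}$ is respected and that the vanishing $r$-term in \eqref{formulapartial} leaves the correct signs in the cocycle identity.
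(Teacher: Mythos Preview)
Your proof is correct and follows essentially the same approach as the paper: both expand $\rho([x,y]_\g)=\Courant{\rho(x),\rho(y)}$ componentwise to obtain the two identities $\phi([x,y]_\g)=[\phi(x),\phi(y)]$ and $\theta([x,y]_\g)=\phi(x)\theta(y)$, and then identify these with the representation and $1$-cocycle conditions respectively. Your version is slightly more detailed in explicitly checking that the $r=0$ substitution trivializes the remaining equalities in \eqref{condition of rep} and in writing out $\partial\theta$, but the argument is the same.
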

\begin{proof} On one hand, we have
$$
\rho([x,y]_\g)=\phi([x,y]_\g)+\theta([x,y]_\g).
$$
On the other hand, we have
\begin{eqnarray*}
  \Courant{\rho(x),\rho(y)}= \Courant{\phi(x)+\theta(x),\phi(y)+\theta(y)}=[\phi(x),\phi(y)]+\phi(x)\theta(y).
\end{eqnarray*}
Thus, $\rho$ is a homomorphism if and only if
\begin{eqnarray}\label{eq:rep con1}
   \phi([x,y]_\g)&=&[\phi(x),\phi(y)],\\
   \label{eq:rep con2}\theta([x,y]_\g)&=&\phi(x)\theta(y).
 \end{eqnarray}
The equality \eqref{eq:rep con1} implies that $(V,\phi,0) $ is a representation of the Leibniz algebra
$(\frkg,[\cdot,\cdot]_\frkg)$, and the equality \eqref{eq:rep
con2} implies that
$\theta:\g\longrightarrow V$ is a $1$-cocycle on the Leibniz algebra
$(\frkg,[\cdot,\cdot]_\frkg)$ with coefficients in
the representation $(V,\phi,0)$. \end{proof}

A usual representation in the sense of Definition \ref{defi:rep old} gives rise to an omni-representation naturally.

\begin{thm}
  Let $(V,l,r)$ be a representation of  a Leibniz algebra
$(\frkg,[\cdot,\cdot]_\frkg)$. Then
  $$\rho=(l^*\otimes 1+ 1\otimes l)+r: \g \longrightarrow \ol(V^*\otimes V) $$ is an omni-representation  of $\g$ on  $V^*\otimes V$.
\end{thm}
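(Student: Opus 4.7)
The plan is to appeal directly to Proposition \ref{pro:rep con} (pro:rep con), which says that a linear map $\rho = \phi + \theta : \g \to \ol(W)$ (for any vector space $W$) is an omni-representation of $\g$ precisely when $(W, \phi, 0)$ is a representation of $\g$ in the Loday--Pirashvili sense and $\theta : \g \to W$ is a $1$-cocycle with coefficients in $(W, \phi, 0)$. So the task reduces to splitting the proposed $\rho$ into its two components, namely $\phi = l^* \otimes 1 + 1 \otimes l : \g \to \gl(V^* \otimes V)$ and $\theta = r : \g \to V^* \otimes V$ (via the identification $V^* \otimes V \cong \gl(V)$), and checking the two hypotheses separately.

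For the first hypothesis, I would recall from the discussion just before Proposition \ref{pro:1cocycle} that $(V^* \otimes V, l^* \otimes 1 + 1 \otimes l, 0)$ has already been observed to be a representation of $\g$; this is a purely formal consequence of the identity $l_{[x,y]_\g} = [l_x, l_y]$, which is built into the hypothesis that $(V,l,r)$ is a representation of $\g$. For the second hypothesis, Proposition \ref{pro:1cocycle} directly asserts that $r : \g \to \gl(V)$ is a $1$-cocycle with coefficients in this representation, which is exactly the $\theta$ needed.

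Assembling these two pieces and invoking Proposition \ref{pro:rep con} then yields the conclusion. There is no real obstacle here: the only mildly subtle point is to make sure that the identification $V^* \otimes V \cong \gl(V)$ used in Proposition \ref{pro:1cocycle} (so that $(l^* \otimes 1 + 1 \otimes l)_x A = [l_x, A]$) is consistent with the definition of $\ol(V^* \otimes V)$, so that the target of $\rho$ matches. Once this identification is acknowledged, the proof is essentially a one-line citation of Propositions \ref{pro:1cocycle} and \ref{pro:rep con}.
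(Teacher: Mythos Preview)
Your proposal is correct and matches the paper's own proof essentially line for line: the paper also simply invokes Proposition~\ref{pro:1cocycle} to get that $r$ is a $1$-cocycle for $(V^*\otimes V,\, l^*\otimes 1 + 1\otimes l,\, 0)$, and then applies Proposition~\ref{pro:rep con} to conclude that $\rho$ is an omni-representation.
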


\begin{proof} By Proposition \ref{pro:1cocycle}, $r:\g\longrightarrow \gl(V)$ is a $1$-cocycle on $\g$ with  coefficients in the representation $(V^*\otimes V,l^*\otimes 1+ 1\otimes l,0)$.
By Proposition \ref{pro:rep con}, $\rho=(l^*\otimes 1+ 1\otimes l)+r$ is a homomorphism from $\g$ to $\ol(V^*\otimes V)$, which implies that $\rho$ is an omni-representation  of $\g$ on  $V^*\otimes V$.\end{proof}

A {\bf trivial  omni-representation}  of a Leibniz algebra
$(\frkg,[\cdot,\cdot]_\frkg)$ on $\mathbb R$
is defined
 to be a homomorphism $$\rho_T=\phi+\theta:\g\longrightarrow\gl(\mathbb R)
 \oplus \mathbb R $$
 such that $\phi=0$.  By Proposition \ref{pro:rep con}, we have
\begin{pro}\label{trivial}
  Trivial   omni-representations of a Leibniz algebra
$(\frkg,[\cdot,\cdot]_\frkg)$ are in one-to-one correspondence
   to $\xi\in\g^*$ such that $\xi|_{[\g,\g]_\g}=0$.
\end{pro}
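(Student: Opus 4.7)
The plan is to directly apply Proposition \ref{pro:rep con} specialized to the setting $V=\mathbb{R}$ and $\phi=0$. Writing a trivial omni-representation as $\rho_T=0+\theta$, the content of Proposition \ref{pro:rep con} becomes two requirements, the first of which trivializes.

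First I would note that when $\phi=0$ and $V=\mathbb{R}$, the condition that $(\mathbb{R},\phi,0)=(\mathbb{R},0,0)$ is a representation of $\g$ is automatic (it is the trivial representation described in Section \ref{sec:rep}). Hence the only nontrivial requirement on $\rho_T$ is the $1$-cocycle condition \eqref{eq:rep con2}, which reads
\begin{equation*}
\theta([x,y]_\g)=\phi(x)\theta(y)=0,\qquad\forall\,x,y\in\g.
\end{equation*}
Since $\theta:\g\longrightarrow\mathbb{R}$ is nothing but an element $\xi\in\g^*$, this condition is exactly $\xi|_{[\g,\g]_\g}=0$.

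Next I would exhibit the bijection explicitly: send $\rho_T=0+\theta$ to the element $\xi:=\theta\in\g^*$, and conversely send $\xi\in\g^*$ with $\xi|_{[\g,\g]_\g}=0$ to the linear map $\rho_T:\g\longrightarrow\gl(\mathbb{R})\oplus\mathbb{R}$ defined by $\rho_T(x)=0+\xi(x)$. The two assignments are mutually inverse by construction, and Proposition \ref{pro:rep con} guarantees that $\rho_T$ built from such a $\xi$ is indeed a Leibniz algebra homomorphism into $\ol(\mathbb{R})$.

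There is essentially no obstacle here; the statement is a direct corollary of Proposition \ref{pro:rep con}. The only subtlety worth emphasizing in the write-up is that the vanishing of $\phi$ collapses the cocycle identity to a linear constraint on $\theta$, so that the $1$-cocycle data reduces to a single vector in $\g^*$ subject to the annihilation condition on the derived subspace $[\g,\g]_\g$.
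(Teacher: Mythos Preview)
Your proposal is correct and follows exactly the approach indicated in the paper, which simply states the proposition as an immediate consequence of Proposition~\ref{pro:rep con}. You have merely unpacked that reference: specializing to $V=\mathbb{R}$ and $\phi=0$ trivializes the representation condition and reduces the $1$-cocycle condition to $\theta([x,y]_\g)=0$, i.e.\ $\xi|_{[\g,\g]_\g}=0$.
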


The {\bf adjoint   omni-representation} $\adj$ of a Leibniz algebra
$(\frkg,[\cdot,\cdot]_\frkg)$ on $\g$ is defined to be the
homomorphism $$\adj=\ad_L+\id: \g\longrightarrow\gl(\g)\oplus \g.$$

\section{Omni-cohomologies of Leibniz algebras}\label{sec:coh}

In this section, we introduce omni-cohomologies of Leibniz algebras associated to omni-representations, and show that omni-cohomology groups   and Loday-Pirashvili   cohomology groups are isomorphic for the trivial representations and adjoint representations.

  Let $\rho:\g\longrightarrow\ol(V)$ be  an omni-representation of   a Leibniz algebra
$(\frkg,[\cdot,\cdot]_\frkg)$. It is obvious that  $\img(\rho) \subset\ol(V)$ is a Leibniz subalgebra so that one can
 define the
set of $k$-cochains by
$$
C^k(\g;\rho)=\{f:\otimes ^k\g\longrightarrow \img(\rho)\}$$ and
 an operator $\delta:C^k(\g;\rho)\longrightarrow
C^{k+1}(\g;\rho)$ by
\begin{eqnarray}
\nonumber\delta
c^k(x_1,\dots,x_{k+1})&=&\sum_{i=1}^k(-1)^{i+1}\Courant{\rho(x_i),c^k(x_1,\dots,\widehat{x_i},\dots,x_{k+1})}
\\\nonumber&&+(-1)^{k+1}\Courant{ c^k(x_1,\dots,x_k),\rho(x_{k+1})}\\
\label{formulapartial}&&+\sum_{1\leq i<j\leq
k+1}(-1)^ic^k(x_1,\dots,\widehat{x_i},\dots,x_{j-1},[x_i,x_j]_\frkg,x_{j+1},\dots,x_{k+1}).
\end{eqnarray}

\begin{lem}
  With the above notations, we have $\delta^2=0$.
\end{lem}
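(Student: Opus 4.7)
The plan is to reduce the identity $\delta^{2}=0$ to the classical Loday--Pirashvili fact $\partial^{2}=0$ from Definition \ref{defi:cohomo old}. The point is that once one spells out what $\delta$ does to a cochain with values in $\img(\rho)$, the two operators are formally identical, provided one can interpret $\img(\rho)$ as a Loday--Pirashvili representation of $\g$. Concretely, I would define $l,r\colon \g\longrightarrow \gl(\img(\rho))$ by
\[
l_{x}(\alpha)=\Courant{\rho(x),\alpha},\qquad r_{x}(\alpha)=\Courant{\alpha,\rho(x)},\qquad \forall\,x\in\g,\ \alpha\in\img(\rho).
\]
These are well-defined because $\img(\rho)$ is a Leibniz subalgebra of $\ol(V)$, so it is stable under bracketing with any $\rho(x)$ on either side.

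The main step is to verify that the triple $(\img(\rho),l,r)$ satisfies the three relations in \eqref{condition of rep}. All three follow from applying the Leibniz identity \eqref{eq:Leibniz} in $\ol(V)$ together with the homomorphism property $\rho([x,y]_{\g})=\Courant{\rho(x),\rho(y)}$, only with different placements of $\alpha$ inside the triple bracket. For example, $l_{[x,y]_{\g}}=[l_{x},l_{y}]$ drops out of the Leibniz identity applied to $(\rho(x),\rho(y),\alpha)$; the identity $r_{[x,y]_{\g}}=[l_{x},r_{y}]$ comes from applying Leibniz to $(\rho(x),\alpha,\rho(y))$; and $r_{y}\circ l_{x}=-r_{y}\circ r_{x}$ comes from combining the previous identity with Leibniz applied to $(\alpha,\rho(x),\rho(y))$. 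Each of these is a one-line computation.

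Having established that $(\img(\rho),l,r)$ is a Loday--Pirashvili representation, I would observe that the formula defining $\delta$ in the omni-cohomology section agrees termwise with $\partial$ in \eqref{formulapartial}: the first two sums match because of the definitions of $l_{x}$ and $r_{x}$, and the shuffle term is literally the same. Therefore $\delta^{2}$ is just $\partial^{2}$ applied to $\img(\rho)$-valued cochains, which vanishes by the classical result. There is no real obstacle here; the only piece that requires care is checking the three representation axioms, and even that amounts to writing the Leibniz identity in $\ol(V)$ three times with different arguments. An alternative, if one preferred not to factor through Definition \ref{defi:rep old}, would be to verify $\delta^{2}=0$ directly using Theorem \ref{leibniz-algebra-B}, but the route via Loday--Pirashvili is shorter and more conceptual.
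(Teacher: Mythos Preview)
Your proposal is correct and follows exactly the same route as the paper: define $l_x(u)=\Courant{\rho(x),u}$ and $r_x(u)=\Courant{u,\rho(x)}$ on $\img(\rho)$, check that $(\img(\rho),l,r)$ is a representation in the sense of Definition~\ref{defi:rep old}, and observe that $\delta$ then coincides with the Loday--Pirashvili coboundary $\partial$. The paper states this more tersely, while you spell out which instance of the Leibniz identity yields each of the three axioms in \eqref{condition of rep}, but the argument is the same.
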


\begin{proof} For all $x\in\g$ and $u\in\img(\rho)$, define
$$
l_x(u)=\Courant{\rho(x),u},\quad r_x(u)=\Courant{u,\rho(x)}.
$$
By the fact that $\ol(V)$ is a Leibniz algebra, we can deduce that
$(\img(\rho);l,r)$ is a representation of $\g$ on $\img(\rho)$ in
the sense of Definition \ref{defi:rep old}, and $\delta$ is just the
usual coboundary operator for this representation so that
$\delta^2=0.$ \end{proof}

Thus, we have a well-defined cochain complex
$(C^{\bullet}(\g;\rho),\delta).$  The corresponding cohomology is  called the {\bf omni-cohomology} of the Leibniz algebra $(\frkg,[\cdot,\cdot]_\frkg)$ with coefficients in the omni-representation $\rho$, and denoted by
$H_{omni}^\bullet(\g;\rho)$. In particular, $H_{omni}^\bullet(\g)$
and $H_{omni}^\bullet(\g;\adj)$ denote the  omni-cohomologies
with coefficients in  trivial    omni-representation and adjoint   omni-representation of $\g$ respectively.

\begin{thm}
 With the above notations,   we have  $H_{omni}^\bullet(\g) =H^\bullet(\g)$.
\end{thm}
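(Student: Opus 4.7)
The approach is to show that for the trivial omni-representation $\rho_T = 0 + \theta : \g \to \gl(\mathbb{R}) \oplus \mathbb{R}$, the omni-coboundary $\delta$ agrees, under a canonical identification, with the Loday-Pirashvili coboundary $\partial$ associated to the trivial representation $(\mathbb{R}, 0, 0)$. First I would exploit the fact that the $\gl(\mathbb{R})$-component of $\rho_T$ is identically zero, so $\img(\rho_T) \subseteq \{0\} \oplus \mathbb{R}$. Choosing a nonzero $\theta$ (as classified by Proposition \ref{trivial}), the image is exactly $\{0\} \oplus \mathbb{R}$, and projection onto the second factor gives a natural isomorphism $C^k(\g;\rho_T) = \Hom(\otimes^k \g,\, \img(\rho_T)) \cong \Hom(\otimes^k \g,\, \mathbb{R}) = C^k(\g, \mathbb{R})$.

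Next I would compute $\delta$ on cochains valued in $\img(\rho_T)$. The key observation is that both $\rho_T(x) = (0,\, \theta(x))$ and each $c^k(\cdots) \in \{0\} \oplus \mathbb{R}$ have vanishing $\gl(\mathbb{R})$-component, so the bracket formula $\Courant{A+u, B+v} = [A,B] + Av$ yields
$$
\Courant{\rho_T(x_i),\, c^k(\cdots)} = 0 \quad \text{and} \quad \Courant{c^k(\cdots),\, \rho_T(x_{k+1})} = 0.
$$
Consequently the first two sums in the defining formula for $\delta c^k$ collapse entirely, leaving only the structural sum involving the Leibniz brackets $[x_i, x_j]_\g$. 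On the Loday-Pirashvili side, the choice $(V, l, r) = (\mathbb{R}, 0, 0)$ kills the analogous two sums in Definition \ref{defi:cohomo old}, so $\partial$ also reduces to the identical structural sum.

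It follows that the two coboundary operators coincide under the canonical identification, giving an isomorphism of cochain complexes $(C^\bullet(\g; \rho_T), \delta) \cong (C^\bullet(\g, \mathbb{R}), \partial)$; passing to cohomology yields $H^\bullet_{omni}(\g) = H^\bullet(\g)$. No substantial obstacle is expected, since the argument is a direct comparison of defining formulas once one notices that every Courant-Dorfman term collapses because both arguments lie in the abelian summand $V$ of $\ol(V)$. The only minor care needed is the choice of a nonzero $\theta$ to ensure $\img(\rho_T)$ is one-dimensional, which is possible whenever $\g \neq [\g, \g]_\g$; the degenerate perfect case can be handled separately or excluded from the intended interpretation of $H^\bullet_{omni}(\g)$.
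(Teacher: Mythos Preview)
Your approach is essentially the same as the paper's: both observe that $\rho_T(x)$ and the cochain values lie in the abelian summand $\{0\}\oplus\mathbb{R}\subset\ol(\mathbb{R})$, so every bracket term $\Courant{\cdot,\cdot}$ in the formula for $\delta$ vanishes and $\delta$ reduces to $\partial$. The paper also splits off the perfect case $[\g,\g]_\g=\g$ exactly as you anticipated: there the only trivial omni-representation is $\rho_T=0$, hence $\img(\rho_T)=0$ and all omni-cochains vanish, while on the Loday--Pirashvili side the paper argues that $\partial\xi=0$ forces $\xi=0$, giving $H^\bullet(\g)=0$ as well.
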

\begin{proof} If $[\g,\g]_\g=\g$, there is only one trivial omni-representation $\rho=0$
 by Proposition \ref{trivial}.
 In this case, all the cochains are also $0$. Thus, $H_{omni}^\bullet(\g)=0$. On the other hand, under the condition  $[\g,\g]_\g=\g$, it is straightforward to deduce that for any $\xi\in C^k(\g)$, $\partial \xi=0$ if and only if $\xi=0$. Thus, $H^\bullet(\g)=0$.

If $[\g,\g]_\g\neq\g$, any $0\neq\xi\in\g^*$ such that
$\xi|_{[\g,\g]_\g}=0$ gives rise to a trivial   omni-representation
$\rho_T$. Furthermore, we have $\img(\rho_T)=\mathbb R$ and
$C^k(\g)=\otimes^k\g^*$. Thus, the sets of cochains are the same
associated to two kinds of representations. Since $V$ is an abelian
subalgebra in $\ol(V)$, for any $\xi\in C^k(\g)$, we have
\begin{eqnarray*}
\delta
\xi(x_1,\dots,x_{k+1})&=&\sum_{i=1}^k(-1)^{i+1}\Courant{\rho_T(x_i),c^k(x_1,\dots,\widehat{x_i},\dots,x_{k+1})}
\\\nonumber&&+(-1)^{k+1}\Courant{ c^k(x_1,\dots,x_k),\rho_T(x_{k+1})}\\
\label{formulapartial}&&+\sum_{1\leq i<j\leq
k+1}(-1)^ic^k(x_1,\dots,\widehat{x_i},\dots,x_{j-1},[x_i,x_j]_\frkg,x_{j+1},\dots,x_{k+1})\\
&=&\sum_{1\leq i<j\leq
k+1}(-1)^ic^k(x_1,\dots,\widehat{x_i},\dots,x_{j-1},[x_i,x_j]_\frkg,x_{j+1},\dots,x_{k+1})\\
&=&\partial \xi(x_1,\dots,x_{k+1}).
\end{eqnarray*}
Thus, we have $H_{omni}^\bullet(\g)=H^\bullet(\g)$.\end{proof}

For the adjoint   omni-representation $\adj=\ad_L+\id:
\g\longrightarrow\gl(\g)\oplus \g,$  any $k$-cochain $f$ is uniquely
determined by a linear map $\frkf:\otimes^k\g\longrightarrow \g$
such that
\begin{equation}\label{eq:correspondence}
f=(\ad_L\circ{\frkf},\frkf):\otimes^k\g\longrightarrow \img(\adj).
\end{equation}

\begin{thm}\label{thm:adjoint}
 With the above notations, we have $H_{omni}^\bullet(\g;\adj)=H^\bullet(\g;\ad_L,\ad_R)$.
\end{thm}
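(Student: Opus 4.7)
The plan is to exhibit an explicit chain-map isomorphism between the omni-cochain complex $(C^\bullet(\g;\adj),\delta)$ and the Loday-Pirashvili complex $(C^\bullet(\g,\g),\partial)$ associated to the adjoint representation $(\g,\ad_L,\ad_R)$. The bijection on cochains is already given by \eqref{eq:correspondence}: a $k$-cochain $f:\otimes^k\g\to\img(\adj)$ is uniquely determined by a linear map $\frkf:\otimes^k\g\to\g$ via $f=(\ad_L\circ\frkf,\frkf)$. Since $\adj(y)=(\ad_L(y),y)$, this is just the statement that $\img(\adj)$ is the graph of $\ad_L:\g\to\gl(\g)$, hence is linearly isomorphic to $\g$.

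The key computation is to verify that this bijection intertwines the two coboundaries. For the left bracket term, using $\ad_L([x,y]_\g)=[\ad_L(x),\ad_L(y)]$ (which is the first equation of \eqref{condition of rep} for $l=\ad_L$, guaranteed by Proposition \ref{pro:rep con} since $(\g,\ad_L,0)$ is a representation), I would show
\[
\Courant{\adj(x),(\ad_L(y),y)}=[\ad_L(x),\ad_L(y)]+\ad_L(x)(y)=\adj([x,y]_\g)=\adj(\ad_L(x)y).
\]
For the right bracket term, similarly
\[
\Courant{(\ad_L(y),y),\adj(x)}=[\ad_L(y),\ad_L(x)]+\ad_L(y)(x)=\adj([y,x]_\g)=\adj(\ad_R(x)y),
\]
which uses the same homomorphism identity together with the fact that the $V$-component of the omni-bracket $\Courant{A+u,B+v}=[A,B]+Av$ only picks up $Av$, producing $[y,x]_\g$ when we feed in $A=\ad_L(y)$ and $v=x$. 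Plugging these two identities into the definition \eqref{formulapartial} of $\delta$, the $i$-th left term becomes $\adj(\ad_L(x_i)\frkf(x_1,\dots,\widehat{x_i},\dots,x_{k+1}))$ and the $(k+1)$-st right term becomes $\adj(\ad_R(x_{k+1})\frkf(x_1,\dots,x_k))$, while the shuffle terms pass through the bijection unchanged. Comparing with the Loday-Pirashvili formula \eqref{formulapartial} for $(\ad_L,\ad_R)$, this shows $\delta f=(\ad_L\circ\partial\frkf,\partial\frkf)$, i.e. the bijection is a chain map.

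Consequently the cochain complexes are isomorphic, which yields the desired equality of cohomologies in each degree. The argument is essentially a bookkeeping check; the only nontrivial input is the identity $[\ad_L(x),\ad_L(y)]=\ad_L([x,y]_\g)$, which is precisely the Leibniz identity \eqref{eq:Leibniz} and is already recorded in \eqref{condition of rep}, so there is no genuine obstacle beyond being careful with the two-component structure of elements of $\img(\adj)\subset\gl(\g)\oplus\g$.
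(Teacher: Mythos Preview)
Your proposal is correct and follows essentially the same approach as the paper: both use the bijection $f\leftrightsquigarrow\frkf$ of \eqref{eq:correspondence} and verify that $\delta f=\adj\circ\partial\frkf$, the paper by a direct term-by-term expansion and you by first isolating the two bracket identities $\Courant{\adj(x),\adj(y)}=\adj(\ad_L(x)y)$ and $\Courant{\adj(y),\adj(x)}=\adj(\ad_R(x)y)$. Your phrasing of the conclusion as a chain-complex isomorphism is in fact slightly cleaner than the paper's separate ``$\delta f=0\Leftrightarrow\partial\frkf=0$'' and ``exact $\Leftrightarrow$ exact'' statements, but the substance is identical.
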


\begin{proof} Since any $k$-cochain $f:\otimes^k\g\longrightarrow \img(\adj)$ is uniquely determined
by a linear map $\frkf:\otimes^k\g\longrightarrow \g$ via \eqref{eq:correspondence}. Thus, there is a
 one-to-one correspondence between the sets of cochains associated to
 the two kinds representations via $f\leftrightsquigarrow\frkf$.
 Furthermore, we have
\begin{eqnarray*}
&&\delta f(x_1,\dots,x_{k+1})\\&=&\sum_{i=1}^k(-1)^{i+1}\Courant{\adj(x_i),f(x_1,\dots,\widehat{x_i},\dots,x_{k+1})}
\\\nonumber&&+(-1)^{k+1}\Courant{ f(x_1,\dots,x_k),\adj(x_{k+1})}\\
\label{formulapartial}&&+\sum_{1\leq i<j\leq
k+1}(-1)^if(x_1,\dots,\widehat{x_i},\dots,x_{j-1},[x_i,x_j]_\frkg,x_{j+1},\dots,x_{k+1})\\
&=&\sum_{i=1}^k(-1)^{i+1}\Courant{\ad_L(x_i)+x_i,\ad_L\frkf(x_1,\dots,\widehat{x_i},\dots,x_{k+1})+\frkf(x_1,\dots,\widehat{x_i},\dots,x_{k+1})}
\\\nonumber&&+(-1)^{k+1}\Courant{ \ad_L\frkf(x_1,\dots,x_k)+\frkf(x_1,\dots,x_k),\ad_L(x_{k+1})+x_{k+1}}\\
\label{formulapartial}&&+\sum_{1\leq i<j\leq
k+1}(-1)^i\Big(\ad_L\frkf(x_1,\dots,\widehat{x_i},\dots,x_{j-1},[x_i,x_j]_\frkg,x_{j+1},\dots,x_{k+1})\\
&&+\frkf(x_1,\dots,\widehat{x_i},\dots,x_{j-1},[x_i,x_j]_\frkg,x_{j+1},\dots,x_{k+1})\Big)\\
&=&\sum_{i=1}^k(-1)^{i+1}\Big([\ad_L(x_i),\ad_L\frkf(x_1,\dots,\widehat{x_i},\dots,x_{k+1})]+\ad_L(x_i)\frkf(x_1,\dots,\widehat{x_i},\dots,x_{k+1})\Big)
\\\nonumber&&+(-1)^{k+1}\Big([ \ad_L\frkf(x_1,\dots,x_k),\ad_L(x_{k+1})]+\ad_L\frkf(x_1,\dots,x_k)x_{k+1}\Big)\\
\label{formulapartial}&&+\sum_{1\leq i<j\leq
k+1}(-1)^i\Big(\ad_L\frkf(x_1,\dots,\widehat{x_i},\dots,x_{j-1},[x_i,x_j]_\frkg,x_{j+1},\dots,x_{k+1})\\
&&+\frkf(x_1,\dots,\widehat{x_i},\dots,x_{j-1},[x_i,x_j]_\frkg,x_{j+1},\dots,x_{k+1})\Big)\\
&=&\sum_{i=1}^k(-1)^{i+1}\Big(\ad_L[x_i,\frkf(x_1,\dots,\widehat{x_i},\dots,x_{k+1})]_\g+[x_i,\frkf(x_1,\dots,\widehat{x_i},\dots,x_{k+1})]_\g\Big)
\\\nonumber&&+(-1)^{k+1}\Big( \ad_L[\frkf(x_1,\dots,x_k),x_{k+1}]_\g+[\frkf(x_1,\dots,x_k),x_{k+1}]_\g\Big)\\
\label{formulapartial}&&+\sum_{1\leq i<j\leq
k+1}(-1)^i\Big(\ad_L\frkf(x_1,\dots,\widehat{x_i},\dots,x_{j-1},[x_i,x_j]_\frkg,x_{j+1},\dots,x_{k+1})\\
&&+\frkf(x_1,\dots,\widehat{x_i},\dots,x_{j-1},[x_i,x_j]_\frkg,x_{j+1},\dots,x_{k+1})\Big)\\
&=&\ad_L\partial \frkf(x_1,\dots,x_{k+1})+\partial \frkf(x_1,\dots,x_{k+1}).
\end{eqnarray*}
Thus, we have $\delta f=0$ if and only if $\partial \frkf=0$. Similarly, we can prove that $f$ is exact if and only if $\frkf$ is exact. Thus, the corresponding cohomology groups are isomorphic.
\end{proof}

At last, we consider an omni-representation $\rho$ such that the
image of $\rho$ is
 contained in the graph $\huaG_\varphi$ for some linear map $\varphi:V\longrightarrow\gl(V)$
 satisfying Eq. \eqref{eq:gr}.
In this case, $\rho$
 is of the form $\rho=\varphi\circ \theta+\theta$, where $\theta:\g\longrightarrow V$ is a linear map. A $k$-cochain $f:\otimes^k\g\longrightarrow \img(\rho)$ is of the form $f=\varphi\circ \frkf+\frkf$, where $\frkf:\otimes^k\g\longrightarrow V$ is a linear map.

 Define left and right actions in the sense of Definition \ref{defi:rep old} by
 \begin{eqnarray}
  \label{eq:l} l_xu&=&\pr\Courant{\rho(x),\varphi(u)+u}=\varphi(\theta(x))u;\\
  \label{eq:r}  r_xu&=&\pr\Courant{\varphi(u)+u,\rho(x)}=\varphi(u)\theta(x),
 \end{eqnarray}
 where $\pr$ is the projection from $\gl(V)\oplus V$ to $V$.  Similar to Theorem \ref{thm:adjoint}, it is easy to
 prove that
\begin{thm}
 Let $\rho$ be an omni-representation such that the image of $\rho$ is contained in the graph $\huaG_\phi$ for some linear map $\varphi:V\longrightarrow\gl(V)$ satisfying Eq.  \eqref{eq:gr}. Then we have
 $$
 H_{omni}^\bullet(\g;\rho)=H^\bullet(\g;l,r),
 $$
 where $l$ and $r$ are given by \eqref{eq:l} and \eqref{eq:r} respectively.
\end{thm}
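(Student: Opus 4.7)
The plan is to adapt the strategy of Theorem \ref{thm:adjoint}. First I would exploit the hypothesis: since $\img(\rho)\subset\huaG_\varphi$, the omni-representation has the form $\rho=\varphi\circ\theta+\theta$ for a unique $\theta:\g\to V$, and accordingly every $k$-cochain $f:\otimes^k\g\to\img(\rho)$ has a unique presentation $f=\varphi\circ\frkf+\frkf$ with $\frkf:\otimes^k\g\to V$. This gives a canonical bijection $f\leftrightsquigarrow\frkf$ between $C^k(\g;\rho)$ and $C^k(\g;V)=\Hom(\otimes^k\g,V)$.

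The heart of the argument is to compute $\delta f$ and match it with the Loday-Pirashvili coboundary $\partial\frkf$ for the representation $(V,l,r)$. For this I would first establish the two bracket identities
\begin{align*}
\Courant{\rho(x),\varphi(u)+u} &= \varphi(l_x u)+l_x u,\\
\Courant{\varphi(u)+u,\rho(x)} &= \varphi(r_x u)+r_x u,
\end{align*}
valid for all $x\in\g$ and $u\in V$. Expanding the left-hand sides with the omni-Lie bracket leaves terms of the form $[\varphi(\theta(x)),\varphi(u)]$ and $[\varphi(u),\varphi(\theta(x))]$, and the embedding-tensor condition \eqref{eq:gr} rewrites each of these as $\varphi$ applied to the corresponding element of $V$, matching the definitions \eqref{eq:l} and \eqref{eq:r} of $l$ and $r$.

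Feeding these identities into the definition of $\delta$ (taking $u=\frkf(\cdots)$) and observing that the purely combinatorial bracket sum already splits term-by-term as $\varphi\circ(\cdots)+(\cdots)$, I expect to arrive at the compact formula
\[
\delta f \;=\; \varphi\circ\partial\frkf+\partial\frkf.
\]
Since any element of the form $\varphi\circ\alpha+\alpha$ vanishes iff $\alpha=0$ (project to the $V$-component of $\gl(V)\oplus V$), this identity shows that $f$ is a cocycle, respectively a coboundary, exactly when $\frkf$ is. Hence the bijection $f\leftrightsquigarrow\frkf$ descends to an isomorphism $H_{omni}^\bullet(\g;\rho)\cong H^\bullet(\g;l,r)$.

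The main obstacle is the verification of the two bracket identities above and, more conceptually, confirming that the embedding-tensor equation \eqref{eq:gr} is precisely what is needed to guarantee that $\delta f$ stays in $\huaG_\varphi$, so that the omni-cochain complex is faithfully encoded by cochains valued in $V$. Once this is set up, the remainder of the computation is a routine reindexing essentially parallel to the one carried out in Theorem \ref{thm:adjoint}.
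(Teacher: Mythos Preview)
Your proposal is correct and follows exactly the approach the paper intends: the paper's own proof consists of the single remark that the argument is ``similar to Theorem \ref{thm:adjoint}'', and what you outline---the bijection $f\leftrightsquigarrow\frkf$ via $f=\varphi\circ\frkf+\frkf$, the two bracket identities obtained from \eqref{eq:gr}, and the resulting formula $\delta f=\varphi\circ\partial\frkf+\partial\frkf$---is precisely that adaptation. The only point to be slightly careful about is that the bijection is a priori with $\Hom(\otimes^k\g,\img(\theta))$ rather than all of $\Hom(\otimes^k\g,V)$, a subtlety the paper also leaves implicit.
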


\end{document}